\newtheorem{thm}{Theorem}[section]
\newtheorem{prop}[thm]{Proposition}
\newtheorem{ques}[thm]{Question}
\def\del{{\partial}}
\begin{document}

\title[Alternating links and left-orderability]%
{Alternating links and left-orderability}

\begin{abstract}

Let $L \subset S^3$ denote an alternating link and $\Sigma(L)$ its branched double-cover.  We give a short proof of the fact that the fundamental group of $\Sigma(L)$ admits a left-ordering iff $L$ is an unlink.  This result is originally due to Boyer-Gordon-Watson.

\end{abstract}

\author[Greene]{Joshua Evan Greene}

\address{Department of Mathematics, Columbia University\\ New York, NY 10027}

\thanks{Partially supported by an NSF Post-doctoral Fellowship.}

\email{josh@math.columbia.edu}

\maketitle

\section{A group presentation.}

Consider a link $L \subset S^3$ presented by a connected planar diagram.  Color its regions black and white in checkerboard fashion, and assign each crossing a sign as displayed in Figure \ref{f: crossingincidence}.  From this coloring we obtain the {\em white graph} $W = (V,E)$.  This is the planar graph with one vertex for each white region, one signed edge for each crossing where two white regions touch, and one arbitrary distinguished vertex $r$ (the {\em root}).

\begin{figure}[t]
\includegraphics[width=2in]{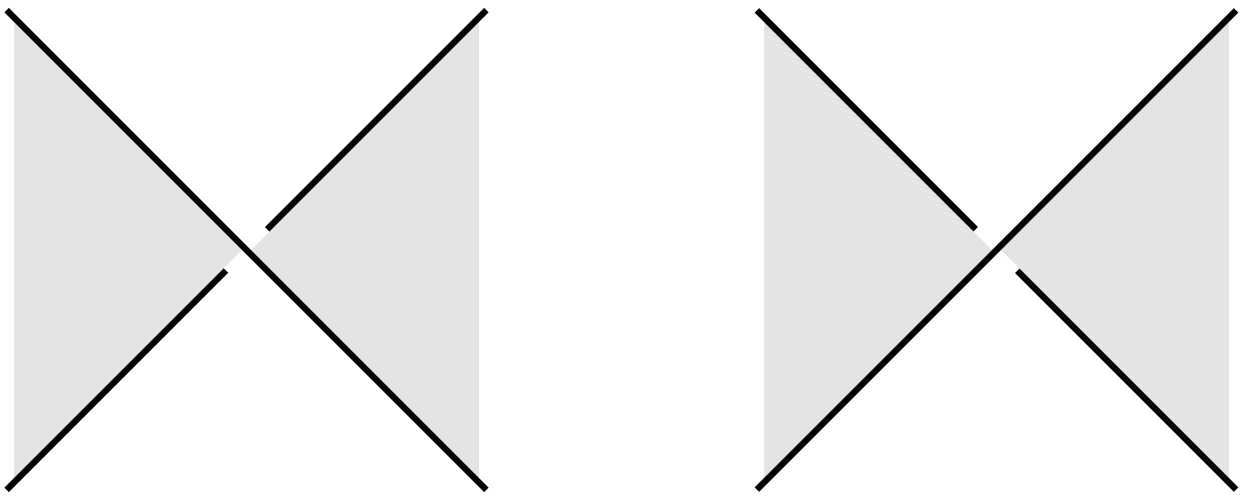}
\put(-125,-15){$+1$}
\put(-35,-15){$-1$}
\caption{Crossings and signs.}\label{f: crossingincidence}
\end{figure}

We form a group $\Gamma$ as follows.  It has one generator $x_v$ and one relation $r_v$ for each $v \in V$, as well as one additional relation $x_r$ (no confusion about the $r$'s!).  To describe the relation $r_v$, consider a small loop $\gamma_v$ centered at $x_v$ and oriented counter-clockwise.  Starting at an arbitrary point along $\gamma_v$, the loop meets edges $(v,w_1), \dots, (v, w_k)$ with respective signs $\epsilon_1, \dots, \epsilon_k$ in order; then $r_v = \prod_{i=1}^k (x_{w_i}^{-1}x_v)^{\epsilon_i}$.

\begin{prop}\label{p: presentation}

The fundamental group of $\Sigma(L)$ is isomorphic to $\Gamma$. \qed

\end{prop}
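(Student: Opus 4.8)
The plan is to build $\Sigma(L)$ from an explicit handle decomposition dictated by the checkerboard coloring and then read off the presentation by van Kampen's theorem, so that the generators $x_v$ become meridional loops indexed by the white regions and the relations $r_v$ become the attaching data at those regions. Concretely, I would realize $\Sigma(L)$ as the boundary of the $4$-manifold $X_W$ obtained from $B^4$ by attaching one $2$-handle for each white vertex along a framed link $\mathcal{L}_W \subset S^3$ read off from $W$: the components are unknots $K_v$, one per $v\in V$, where $K_v$ and $K_w$ clasp once for each edge $(v,w)$ with the sign of the clasp equal to the edge sign $\epsilon$, while self-touchings contribute to the framing of $K_v$. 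This is the standard Goeritz model of the branched double cover, and as a first consistency check its intersection form is the signed Laplacian of $W$, so that $H_1(\Sigma(L)) \cong \cok(G_W)$ agrees with the abelianization of $\Gamma$ (where $\sum_i \epsilon_i(x_v-x_{w_i})=0$ is exactly the $v$-th row of the Goeritz matrix).

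Granting this model, $\pi_1(\Sigma(L))=\pi_1(\partial X_W)$ has the surgery presentation with one generator $x_v$ per component---the meridian of $K_v$, which is precisely the lift of the small counter-clockwise loop $\gamma_v$ encircling the white region $v$---and one relation per component, namely the framed longitude $\lambda_v=1$. The heart of the argument is to compute $\lambda_v$ as a word in the $x_w$. Traversing $\del D_v$, the spanning disk bounded by $K_v$, one passes through the disk $D_{w_i}$ once for each edge $(v,w_i)$, in the cyclic order in which $\gamma_v$ meets those edges, each passage contributing a factor $x_{w_i}^{\pm 1}$ conjugated by the running product of $x_v$'s; after absorbing the self-framing this collapses to $\lambda_v=\prod_{i=1}^k (x_{w_i}^{-1}x_v)^{\epsilon_i}=r_v$, reproducing the stated relation including its ordering and signs. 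The extra relation $x_r=1$ records the single redundancy built into using all $|V|$ white regions symmetrically rather than the $|V|-1$ non-root ones of the reduced Goeritz form: capping off (or deleting) the root handle sets its meridian to the identity, and the remaining presentation is exactly $\Gamma$.

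The main obstacle is this middle step: pinning down the exact non-abelian word for $\lambda_v$---the cyclic ordering of the factors, the placement of the conjugating $x_v$'s, and the correct sign and framing contributions at each crossing---directly from the local picture of Figure \ref{f: crossingincidence}, rather than merely recovering the abelianized Goeritz relation. Two further points need care. First, the black checkerboard surface is typically non-orientable, so I must check that the handle/surgery model and the meridian--longitude conventions are insensitive to this and that the clasp signs are globally consistent with the edge signs. Second, I should verify that exactly one of the $r_v$ is a consequence of the others together with $x_r=1$, so that the apparent deficiency $-1$ presentation $\Gamma$ is genuinely a balanced presentation of the fundamental group of the rational homology sphere $\Sigma(L)$, matching the count of generators and relations coming from $X_W$.
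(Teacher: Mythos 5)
The paper does not prove this proposition in situ: it defers to \cite[\S 3.1]{greene:spanningtrees}, where the presentation is read off from an explicit Heegaard diagram of $\Sigma(L)$ (with a Wirtinger-style derivation mentioned as an alternative). Your route --- realizing $\Sigma(L)$ as the boundary of a $4$-manifold built from the Goeritz/white-graph data and reading $\pi_1$ from the surgery presentation --- is a legitimately different and standard-sounding strategy, and your consistency check on abelianizations (cokernel of the Goeritz matrix versus $H_1$ of $\Gamma$) is correct. But as written the proposal has a genuine gap, and you have in effect flagged it yourself: the entire content of the proposition is the exact non-abelian form of the relation $r_v = \prod_{i=1}^k (x_{w_i}^{-1}x_v)^{\epsilon_i}$, with its specific cyclic ordering, sign placement, and the precise way the self-framing of $K_v$ interleaves with the clasps. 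Your computation of the framed longitude $\lambda_v$ asserts that each passage through a clasp contributes $x_{w_i}^{\pm 1}$ ``conjugated by the running product of $x_v$'s'' and that this ``collapses'' to $r_v$; in a genuine Wirtinger computation for the link $\mathcal{L}_W$ the conjugating elements are a priori words in all the generators determined by the arcs chosen, and showing they reduce to powers of $x_v$ alone is exactly the lemma you would need to prove. Until that is done, you have only recovered the abelianized (Goeritz) relation, which does not determine $\pi_1$.

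Two smaller points. First, you should not attach a $2$-handle for every vertex of $W$ and then ``delete'' the root: the full signed Laplacian is degenerate, so surgery on all $|V|$ components does not produce $\Sigma(L)$. The correct statement is that $X_W$ has one $2$-handle per non-root vertex (matching $b_1$ of the black surface), and $x_r = 1$ is pure bookkeeping that lets the presentation be written symmetrically; your parenthetical acknowledges this but the phrasing conflates the two constructions. Second, the identification of the surgery link as unknots with one clasp per edge, arranged in the planar cyclic order around each $K_v$, itself requires an argument (e.g.\ via the double cover of $B^4$ branched over the pushed-in black surface and an Akbulut--Kirby type handle description); the linking matrix alone does not pin down the link, and the cyclic order of the clasps is precisely what produces the ordering of the factors in $r_v$. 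None of these steps is implausible, but each is asserted rather than proved, and together they constitute the actual proof.
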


Proposition \ref{p: presentation} is established in \cite[$\S$3.1]{greene:spanningtrees}, in which the presentation for $\Gamma$ derives from a specific Heegaard diagram of $\Sigma(L)$ (see the beginning of \cite[$\S$3.2]{greene:spanningtrees}, as well).  Dylan Thurston points out that the standard derivation of the Wirtinger presentation of a knot group suggests an alternate route to this fact. 

\section{Non-left-orderability.}

In this section we use Proposition \ref{p: presentation} to establish the main result.

\begin{thm}[Boyer-Gordon-Watson \cite{bgw:lo}]\label{t: main}

If $L$ is an alternating link, then $\pi_1(\Sigma(L))$ admits a left-ordering iff $L$ is an unlink.

\end{thm}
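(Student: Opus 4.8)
The plan is to run everything through the presentation $\Gamma$ of Proposition~\ref{p: presentation}, using the one feature special to alternating diagrams: every edge of the white graph carries the \emph{same} sign. Assume this common sign is $+1$ (the case $-1$ is symmetric), so that each relation reads $r_v = \prod_{i=1}^{k}(x_{w_i}^{-1}x_v)$, a product of difference terms with no mixed exponents. This sign-homogeneity is exactly what a left-invariant order cannot digest. The reverse implication of the theorem is then immediate: if $L$ is the $n$-component unlink then $\Sigma(L)\cong \#^{\,n-1}(S^1\times S^2)$, so $\pi_1(\Sigma(L))\cong F_{n-1}$ is free and hence left-orderable.

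For the forward implication I would argue the contrapositive by a maximum principle, treating first the non-split case, where $L$ has a connected diagram and the white graph $W$ is connected. Suppose $\Gamma$ carries a left-ordering. Since $V$ is finite, choose $v$ with $x_v$ largest among the generators. For each neighbor $w_i$ we have $x_{w_i}\le x_v$, and left-multiplying by $x_{w_i}^{-1}$ gives $1\le x_{w_i}^{-1}x_v$; thus every factor of $r_v$ is $\ge 1$. In a left-ordered group a product of elements each $\ge 1$ is itself $\ge 1$, and equals the identity only if every factor does, so $r_v=1$ forces $x_{w_i}=x_v$ for all $w_i$. Hence the vertices attaining the maximum are closed under adjacency; connectedness of $W$ makes them all of $V$, so every generator equals this maximum, and the relation $x_r=1$ pins the common value to the identity. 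Therefore $\Gamma$ is trivial.

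Triviality must now be translated back into the statement that $L$ is the unknot. Abelianizing $r_v$ yields the $v$-th row of the Laplacian of $W$, so after killing $x_r$ one gets $H_1(\Sigma(L))\cong \Z^{\,V\setminus\{r\}}/\bar\Lambda$ with $\bar\Lambda$ the reduced Laplacian; by the matrix--tree theorem its order is the number of spanning trees of $W$. Triviality of $\Gamma$ makes this order $1$, so $W$ is a tree. Since we may reduce the diagram at the outset—this alters neither $L$ nor $\Gamma$ and preserves connectedness—$W$ has no nugatory crossings; but every edge of a tree is a bridge and a bridge of $W$ is a nugatory crossing, so $W$ has no edges. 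Then the diagram has no crossings and $L$ is the unknot.

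The split case reduces to the above. Writing $L=L_1\sqcup\cdots\sqcup L_m$ as a disjoint union of non-split alternating pieces, one has $\Sigma(L)\cong \Sigma(L_1)\#\cdots\#\Sigma(L_m)\#\big(\#^{\,m-1}(S^1\times S^2)\big)$, so $\pi_1(\Sigma(L))$ is a free product of the $\pi_1(\Sigma(L_i))$ with a free group. Since subgroups of left-orderable groups are left-orderable, each free factor $\pi_1(\Sigma(L_i))$ is left-orderable, the non-split case makes each $L_i$ an unknot, and $L$ is an unlink. The one genuinely delicate step is the maximum principle, and it is delicate precisely because it relies on all the signs $\epsilon_i$ agreeing: that is the alternating hypothesis. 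For a non-alternating diagram a single relation mixes factors $\ge 1$ with factors $\le 1$, the monotonicity collapses, and the conclusion itself can fail.
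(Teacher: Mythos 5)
Your proposal is correct and takes essentially the same approach as the paper: the identical maximum principle applied to the presentation $\Gamma$, exploiting that every edge of the white graph of an alternating diagram carries the same sign, together with the same reduction of the split case via Vinogradov's theorem on free products. The only deviations are minor: you supply a self-contained proof (matrix--tree theorem plus the bridge/nugatory-crossing argument) of the step where the paper simply invokes the known fact that an alternating link with $\det(L)=1$ is the unknot, and your connected-sum formula for a split link correctly includes the $S^1\times S^2$ summand that the paper's displayed decomposition omits (harmlessly, since $\Z$ is left-orderable).
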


\begin{proof}

Observe that if $L = L_1 \cup L_2$ is a split link, then $\Sigma(L) \cong \Sigma(L_1) \# \Sigma(L_2)$ and $\pi_1(\Sigma(L))$ decomposes as the free product $\pi_1(\Sigma(L_1)) * \pi_1(\Sigma(L_2))$.  Furthermore, a free product admits a left-ordering iff each of its factors do \cite{vinogradov:lo}.  Therefore, it suffices to restrict attention to the case of a non-split alternating link $L$.  With this assumption in place, the theorem follows once we establish that $\pi_1(\Sigma(L))$ admits a left-ordering iff $L$ is the unknot.

Present $L$ by a connected, alternating diagram; color it, distinguish a root $r$, and let $W$ denote the resulting white graph.  It follows that every edge gets the same sign $\epsilon$.  Mirroring $L$ if necessary (which leaves $\pi_1$ unchanged), we may assume that $\epsilon = 1$. Now suppose that $\Gamma \cong \pi_1(\Sigma(L))$ possessed a left-ordering $<$.  Choose a vertex $v$ for which $x_w \leq x_v$ for all $w \in V$.  If $x_v = x_w$ for {\em all} $w \in V$, then from the relation $x_r$ it follows that $1 = \Gamma \cong \pi_1(\Sigma(L))$; but then $1 = |H_1(\Sigma(L))| = \det(L)$, and since $L$ is alternating, it follows that $L = U$.

Thus, we assume henceforth that $L \ne U$ and seek a contradiction.  It follows that there exists some $w \in V$ for which $x_w < x_v$; from the connectivity of $W$, we may assume that $(v,w) \in E$.  It follows that $1 < x_w^{-1} x_v$, while $1 \leq x_{w_i}^{-1} x_v$ for every other edge $(v,w_i) \in E$.  Therefore, the product of all these terms in any order is greater than $1$.  In particular, $1 < \prod_{i=1}^k (x_{w_i}^{-1} x_v) = r_v = 1$, a contradiction.

\end{proof}


\section{Dicsussion.}

It remains an outstanding problem to relate $\pi_1(Y)$ to the Heegaard Floer homology of a 3-manifold $Y$.  As of this writing, it remains a possibility that a rational homology sphere $Y$ is an L-space iff $\pi_1(Y) \ne 1$ does not admit a left-ordering.  Theorem \ref{t: main} supports this conjecture, since $\Sigma(L)$ is a rational homology sphere L-space for a non-split alternating link $L$ (no confusion about the $L$'s!) \cite[Prop.3.3]{os:doublecover}.  Additional examples appear in \cite{bgw:lo, clw:graphlo, claywatson:lodehn, claywatson:locable, peters:lo}.


In this spirit, Peter Ozsv\'ath raises an interesting question.   Let $(Y_0, Y_1, Y_2)$ denote a surgery triple of rational homology spheres.    That is, there exists a manifold $M$ with torus boundary and a triple of slopes $(\gamma_0,\gamma_1,\gamma_2)$ in $\del M$ such that $Y_i$ results from filling $M$ along slope $\gamma_i$ and $\gamma_i \cdot \gamma_{i+1} = +1$, for all $i \pmod 3$.  Cyclically permuting the indices if necessary, assume that $|H_1(Y_0)| = |H_1(Y_1)| + |H_1(Y_2)|$.

\begin{ques}\label{q}

If $\pi_1(Y_0)$ admits a left-ordering, does it follow that one of $\pi_1(Y_1)$ and $\pi_1(Y_2)$ must as well?

\end{ques}

\noindent Note that if $Y_1$ and $Y_2$ are L-spaces, then so is $Y_0$ according to the surgery exact triangle in $\widehat{HF}$.  This is the motivation behind Question \ref{q}.  An affirmative answer would imply that Theorem \ref{t: main} extends to quasi-alternating links.

\noindent {\em Note.} Tetsuya Ito has applied the idea in this paper to a different presentation for $\pi_1(\Sigma(L))$ to recover yet another proof of Theorem \ref{t: main} \cite{ito:bdc}.

\section*{Acknowledgments}

Thanks to Cameron Gordon for describing Theorem \ref{t: main} and the idea behind its original proof, which inspired the one presented here.  Thanks to Peter Ozsv\'ath, Dylan Thurston, and Liam Watson for further conversations.


\bibliographystyle{plain}
\bibliography{/Users/Josh/Desktop/Papers/References}

\begin{thebibliography}{1}

\bibitem{bgw:lo}
S.~Boyer, C.~McA. Gordon, and L.~Watson.
\newblock On {L}-spaces and left-orderable fundamental groups.
\newblock {\em {\tt arXiv:1107.5016}}, 2011.

\bibitem{clw:graphlo}
A.~Clay, T.~Lidman, and L.~Watson.
\newblock Graph manifolds, left-orderability and amalgamation.
\newblock {\em {\tt arXiv:1106.0486}}, 2011.

\bibitem{claywatson:lodehn}
A.~Clay and L.~Watson.
\newblock Left-orderable fundamental groups and {D}ehn surgery.
\newblock {\em {\tt arXiv:1009.4176}}, 2010.

\bibitem{claywatson:locable}
A.~Clay and L.~Watson.
\newblock On cabled knots, {D}ehn surgery, and left-orderable fundamental
  groups.
\newblock {\em {\tt arXiv:1103.2358}}, 2011.

\bibitem{greene:spanningtrees}
J.~E. Greene.
\newblock A spanning tree model for the {H}eegaard {F}loer homology of a
  branched double-cover.
\newblock {\em To appear, Journal of Topology}.

\bibitem{ito:bdc}
T.~Ito.
\newblock Non-left-orderable double branched coverings.
\newblock {\em {\tt arXiv:1106.1499}}, 2011.

\bibitem{os:doublecover}
P.~Ozsv{\'a}th and Z.~Szab{\'o}.
\newblock On the {H}eegaard {F}loer homology of branched double-covers.
\newblock {\em Adv. Math.}, 194(1):1--33, 2005.

\bibitem{peters:lo}
T.~Peters.
\newblock On {L}-spaces and non left-orderable 3-manifold groups.
\newblock {\em {\tt arXiv:0903.4495}}, 2009.

\bibitem{vinogradov:lo}
A.~A. Vinogradov.
\newblock On the free product of ordered groups.
\newblock {\em Mat. Sbornik N.S.}, 25(67):163--168, 1949.

\end{thebibliography}

\end{document}